\newtheorem{theorem}{\bf Theorem}[section]
\newtheorem{proposition}{\bf Proposition}[section]
\newtheorem{remark}{\bf Remark}[section]
\newtheorem{example}{\bf Example}
\newtheorem{algorithm}{\bf Algorithm}%[section]
\newtheorem{problem}{\bf Problem}[section]
\newcommand{\bmat}{\left[ \begin{matrix}}
\newcommand{\emat}{\end{matrix} \right]}
\newcommand{\N}{\mathbb  N}
\newcommand{\R}{\mathbb  R}
\newcommand{\bC}{\mathbf C}
\newcommand{\bx}{\mathbf x}
\newcommand{\bSigma}{\boldsymbol{\Sigma}}
\newcommand{\cK}{\mathcal K}
\newcommand{\cS}{\mathcal S}
\newcommand{\Ebb}{{\mathbb E}\,}
\definecolor{darkgreen}{rgb}{0.0, 0.7, 0.0}
\begin{document}

\title{Maximum Entropy Kernels for System Identification}

\author{Francesca~P.~Carli,
        Tianshi~Chen, ~\IEEEmembership{Member,~IEEE,}
        and~Lennart~Ljung ~\IEEEmembership{Life~Fellow,~IEEE}% <-this % stops a space
\thanks{This work was supported by the ERC advanced grant LEARN No. 267381, funded by the European Research Council.}
\thanks{Francesca~P.~Carli is with the Department of Engineering, University of Cambridge, Cambridge, United Kingdom,
 and with the Department of Electrical Engineering and Computer Science, University of Li\`{e}ge, Li\`{e}ge, Belgium,
{\tt\small  fpc23@cam.ac.uk}.
}% <-this % stops a space
\thanks{Tianshi~Chen and Lennart~Ljung are with the Division of Automatic Control, Department of Electrical Engineering,
Link\"{o}ping University, Link\"{o}ping, Sweden,
{\tt\small  tianshi.chen@liu.se}, {\tt\small  ljung@isy.liu.se}.
%(e-mail: tschen@isy.liu.se; ljung@isy.liu.se).
%are with the Department of Electrical Engineering, Link\"{o}ping University, SE-581 83 Link\"{o}ping, Sweden,
%{\tt\small  tianshi.chen@liu.se}, {\tt\small  ljung@isy.liu.se}. }% <-this % stops a space
%%%\thanks{Manuscript received April 19, 2005; revised December 27, 2012.}
}% <-this % stops a space
}

\maketitle

\begin{abstract}
A new nonparametric approach for system identification has been recently proposed
where the impulse response is modeled as the %realization 
%Bayesian nonparametric approaches have been recently introduced in system identification scenario where the impulse response is modeled as the
realization of a zero--mean Gaussian process whose covariance
(kernel) has to be estimated from data. In this scheme, quality of
the estimates crucially depends on the parametrization of the
covariance of the Gaussian process. A family of kernels that have
been shown to be particularly effective in the system identification
framework is the family of Diagonal/Correlated (DC) kernels. Maximum
entropy properties of a related family of kernels, the Tuned/Correlated (TC) kernels, have been recently pointed out in the literature.
In this paper we show that maximum entropy properties indeed extend  
to the whole family of DC kernels. The maximum entropy
interpretation can be exploited in conjunction with results on
matrix completion problems in the graphical models literature
to shed light on the structure of the DC kernel. In particular, we
prove that the DC kernel admits a closed--form factorization, inverse and determinant. 
These results can be exploited both to improve the numerical stability and to
reduce the computational complexity associated with the computation
of the DC estimator.
\end{abstract}

%%%%%%%%%%%%%%%%%%%%%%%%%%%%%%%%%%%%%%%%%%%%%%%%%%%%%%%%%%%%%%%%%%%%%%%%%%%%%%%%
\section{Introduction}\label{sec:Introduction}

%\textcolor{blue}
{System identification is concerned with automatic dynamic model building from measured data.
``Classical'' approaches to linear system identification are parametric maximum likelihood (ML)/Prediction Error Methods (PEM).
Here, a finitely--parametrized model class (OE, ARX, ARMAX, Box-Jenkins, etc.)  is first specified, then
model--order selection is performed by
optimizing some penalized goodness--of--fit criteria, such as the
Akaike information criterion (AIC) %\cite{Akaike1974}
or the Bayesian information
criterion (BIC), %\cite{Schwarz1978},
or via cross validation (CV) % \cite{HastieTibshirani2008}
and finally the finite dimensional parameter vector is estimated by minimizing a suitable cost function (the prediction error or minus the log--likelihood).
While ``standard'' (i.e. without model selection) statistical theory suggests that PEMs should be asymptotically efficient for Gaussian innovations,
sample properties of the estimates returned after model selection, the so--called Post--Model--Selection--Estimators (PMSE),
may much depart from those suggested by asymptotic analysis. }

%\textcolor{blue}
{Inspired by works in
statistics \cite{Tibshirani1996}, machine learning \cite{ShaweTaylorCristianini2004},
and inverse problems \cite{TikhonovArsenin1977},
regularized kernel methods have been recently introduced in the system identification scenario \cite{PillonettoDeNicolao2010, ChenOhlssonLjung2012, PillonettoDinuzzoChenDeNicolaoLjung2014} to jointly perform model selection  and  estimation. %in a computationally efficient and  statistically robust  manner.
According to these methods, the estimated impulse response is seen as the solution of a variational problem whose cost function is the combination of two terms:
a loss function that takes into account fit to the data
and a regularization term that keeps under control the complexity of the solution. %, by penalizing undesirable behaviors. % allowing to face the so--called bias/variance dilemma.
%This allows to face the so--called bias/variance trade--off by jointly performing model selection  and  estimation.
In this paper, we consider $\ell_{2}$--type penalties which also admit a Bayesian interpretation in the form of a zero--mean Gaussian prior imposed on the to-be-estimated impulse response.
The corresponding Bayesian procedure goes under the name of Gaussian process regression.
The covariance of the Gaussian process usually depends on few hyperparameters that, according to an empirical Bayes paradigm, are learnt from data via e.g. marginal likelihood (ML) maximization.
The resulting Bayesian nonparametric approach equipped with marginal likelihood estimation of the hyperparameters
has shown advantages over parametric PEMs
above all in situations where the number of measurements and/or the signal to noise ratio is not particularly high
(see \cite{PillonettoDeNicolao2010,ChenOhlssonLjung2012, CarliChenChiusoLjungPillonetto2012} %for a theoretical analysis and
%(see \cite{PillonettoDeNicolao2010,ChenOhlssonLjung2012, CarliChenChiusoLjungPillonetto2012, PillonettoChiuso2015} %for a theoretical analysis and
and \cite{PillonettoDinuzzoChenDeNicolaoLjung2014} where the scheme has been successfully tested on a real robotic application). }

%\textcolor{blue}
{In this scheme, quality of the estimates crucially depends on the choice of the covariance (kernel) of the Gaussian process,
which should be flexible enough to capture diverse and complicated dynamics,
but simple enough to keep the ML maximization problem easy to solve.
Many kernels have been introduced in the machine learning literature %\cite{ShaweTaylorCristianini2004,ScholkopfSmola2001}
\cite{ShaweTaylorCristianini2004}, but
a straight application of these kernels
in the system identification framework is doomed to fail mainly because they fail to encode desirable structural properties (e.g. BIBO stability) for
the estimated impulse response.
For this reason, several kernels
have been introduced in the system identification literature
%\cite{PillonettoDeNicolao2010,PillonettoChiusoDeNicolao2010_ACC,ChenOhlssonGoodwinLjung2011,ChenOhlssonLjung2012}.
\cite{PillonettoDeNicolao2010,ChenOhlssonLjung2012}.
}

{This paper focuses on Diagonal/Correlated (DC) kernels, which were introduced by a deterministic argument in
\cite{ChenOhlssonLjung2012}, where their performance was assessed on a data bank of test systems and data sets.
In this paper, we show that DC kernels admit a maximum entropy interpretation.
Maximum entropy properties of a related family of kernels, the TC kernels, have been pointed out in the literature, see \cite{Carli2014,PillonettoDeNicolao2011}.
Here we show that arguments in \cite{Carli2014}, resting on the theory of matrix completion
problems in the graphical models literature \cite{Dempster1972,GroneJohnsonSaWolkowicz1984,DymGohberg1981,Johnson1990,GohbergGoldbergKaashoek1993,DahlVanderbergheRoychowdhury2008},
extend to the whole family of DC kernels.
While TC kernels can be seen both as a particularization of DC kernels for $\rho=\sqrt{\lambda}$ \cite{ChenOhlssonLjung2012} %\cite{ChenOhlssonGoodwinLjung2011}
(see below) and as a member of
the family of stable spline kernels  \cite{PillonettoDeNicolao2010}, %\cite{PillonettoChiusoDeNicolao2010_ACC},
it is interesting to note that
the maximum entropy interpretation does \emph{not} extend to the family of stable spline kernels (e.g. to second--order stable spline kernels). }

%\textcolor{blue}
{
Leveraging on the theory of matrix completion problems, the maximum entropy interpretation %sheds
is used to shed light on the structure on the DC kernel, leading to a
closed form expression for the inverse and determinant,
as well as to a closed form factorization.
Since best state--of--the--art algorithms for marginal likelihood maximization are based on numerical computation of the Cholesky factors of the kernel matrix, which can be very ill--conditioned,
the derived closed form factorization can be used to improve \emph{numerical stability} of these procedure.
By exploiting sparsity of the inverse of the kernel matrix, \emph{computational efficiency} can also be significantly improved, as detailed in the paper.   }

%The paper is organized as follows. In Section
%\ref{sec:LinearSYSDviaGP} the problem is introduced and Gaussian
%process regression via the DC kernel is briefly reviewed. In Section
%\ref{sec:MaxEntrBandExt} relevant theory of maximum entropy matrix
%completion problems is introduced. Section
%\ref{sec:DC_as_MaxEntrkernel} contains our main results. In Section
%\ref{sec:comp_complexity_DC} properties of the DC kernel are
%exploited to improve efficiency and robustness of  computational
%schemes for the evaluation of DC estimators. Section
%\ref{sec:Conclusions} ends the paper.
%
%
%\vspace{4mm}\\
%\noindent
\textbf{Notation.}
In the following, we denote by $\cS_n$ the set of symmetric matrices of order $n$.
We write $A \succeq 0$ (resp. $A \succ 0$) to denote that $A$ is positive semidefinite (resp. positive definite).
$I_k$ will denote the identity matrix of order $k$,
and ${\rm diag}\left\{a_1, a_2,\dots, a_k\right\}$ the diagonal matrix with diagonal elements $\left\{a_1,a_2, \dots, a_k\right\}$.
If $A$ is a square matrix of order $n$, for index sets $\beta \subseteq \left\{1, \dots, n\right\}$ and $\gamma \subseteq \left\{1, \dots, n\right\}$,
we denote by $A(\beta,\gamma)$ the submatrix that lies in the rows of $A$ indexed by $\beta$ and the columns indexed by $\gamma$.
If $\gamma=\beta$, we use the shorthand $A(\beta)$ for $A(\beta, \beta)$.
Finally, for $p,q\in\N$ with $p<q$, the index set $\{p,p+1,\cdots,q\}$ is abbreviated as $p:q$.

% ==============================
% ==============================
\section{Linear system identification via Gaussian Process Regression}\label{sec:LinearSYSDviaGP}

% ==============================

%\textcolor{blue}
{Consider a single input single output (SISO) system described by } %of the form
\begin{align}\label{eq:dyn}
y(t) = \sum_{k=1}^\infty g(k) u(t-k) + v(t), \quad t=1,\dots, N
\end{align}
where $u(t)$ and $y(t)$ are the measurable input and output signal, respectively,
$g(t)$ is the (unknown) impulse response and
$v(t)$ is a zero--mean white Gaussian noise with variance $\sigma^2$.
The system identification problem amounts to the following: given $\{y(t),u(t)\}_{t=1}^N$ find and estimator of the impulse response $g$.

Model (\ref{eq:dyn}) can be expressed in matrix form as
\begin{equation}\label{eq:linreg}
Y= \Phi^\top g + V
\end{equation}
where $Y$ and $V$ are $N$--dimensional vectors, $\Phi^\top \in \R^{N \times \infty}$ is a matrix whose entries are defined by the system input,
the unknown samples from $u(t)$ having been set to zero, and
$g=\bmat g(1)\ g(2)\ \dots \; \;  \; \emat^\top$ is the infinite--dimensional vector  representing the impulse response.

% ==============================

\subsection{Gaussian process regression with DC Kernels}\label{subsec:GPregression_numerical stability via_SSkernel}

Under the framework of Gaussian process regression
\cite{RasmussenWilliams2006}, the impulse response $g(t)$ is modeled
as a discrete--time Gaussian process with zero mean and a suitable
covariance, independent of $v(t)$:
\begin{equation}\label{modgp}
g(t) \sim \mathcal{GP}(0, K(t,t';\eta)),\quad t,t'=1,2,\dots \,.
\end{equation}
The covariance $K(t,t'; \eta)$ (also referred to as the kernel in the machine learning literature)
is parameterized by a vector of hyperparameters $\eta$.
Since the impulse response of linear stable systems decays exponentially to zero,
it is often enough to truncate the infinite impulse response at a certain order $n$
so that $g$ in (\ref{eq:linreg}) becomes
$g=[g(1)\ g(2)\ \cdots\ g(n)]^\top$, and
$\Phi^\top \in \R^{N \times n}$;
likewise, $K(\eta)$ in \eqref{modgp} becomes a $n \times n$ covariance (kernel) matrix.
%\textcolor{blue}
{In a system identification scenario, the number of data $N$ can be of the order of several thousands while the truncation order $n$ shouldn't exceed a couple of hundreds.
It follows that we shall typically assume $n \ll N$.  }

\vspace{2mm}
According to an Empirical Bayes paradigm \cite{MaritzLwin1989}, %\cite{Berger1985, MaritzLwin1989},
the hyperparameters can be estimated from the data via marginal likelihood maximization, i.e.
by maximizing the marginalization with respect to $g$ of the joint density of $Y$ and $g$
\begin{align}\label{eq:lglklhd} %{eqn:J_N}
\hat \eta %&:= -2\log p\left(y | \, \alpha\right) \nonumber \\
&= \arg\min_{\eta} \;\left\{\log \det (\Phi^\top K(\eta)
\Phi+\sigma^2 I_N)\right.\nonumber\\ &\qquad\qquad\qquad\qquad\left.
+ Y^\top (\Phi^\top K(\eta) \Phi + \sigma^2 I_N)^{-1} Y\right\}\,.
\end{align}

%\textcolor{blue}
{The estimated impulse response is computed
as the maximum a posteriori (MAP) estimate (that, for symmetric and unimodal probability density functions, coincides with the conditional mean), which,  once $\eta$ is estimated, admits the closed form representation}
\begin{equation}\label{eqn:f}
\hat g_{MAP} \equiv  \Ebb \left[g | Y, \hat \eta \right] = K(\hat \eta) \Phi
\left(\Phi^\top K(\hat \eta) \Phi + \sigma^2 I_N\right)^{-1} Y \,.
\end{equation}
Prior information is introduced in the identification process by
assigning the covariance $K(\eta)$.
The quality of the estimates
crucially depends on this choice as well as on the quality of the
estimated $\hat \eta$. Several kernels have been recently introduced
in the system identification literature
%\cite{PillonettoDeNicolao2010,PillonettoChiusoDeNicolao2010_ACC,ChenOhlssonGoodwinLjung2011,ChenOhlssonLjung2012}.
\cite{PillonettoDeNicolao2010,ChenOhlssonLjung2012}.
A class of prior covariances which has been proved to be very
effective in the system identification scenario, is the class of
Diagonal/Correlated (DC) kernels \cite{ChenOhlssonLjung2012}:
%\cite{ChenOhlssonGoodwinLjung2011,ChenOhlssonLjung2012}:
\begin{equation}\label{eqn:DC_kernel}
     \left(\cK_{DC}\right)_{ij} := \cK_{DC}(i,j;\eta) = c \, \lambda^{\frac{i+j}{2}} \rho^{\left|i-j\right|},\ i,j=1,\cdots,n %\qquad c\geq 0,\;\, 0 \leq \lambda < 1, \;\,   -1 \leq \rho \leq 1
\end{equation} where $\eta=[c\ \lambda\ \rho]^\top$
with $c\geq 0$ ,  $0 \leq \lambda < 1$, $-1 \leq \rho \leq 1$. Here
$\lambda$ accounts for the exponential decay of the impulse
response, while $\rho$ describes the correlation between neighboring
impulse response coefficients. If $\rho = \sqrt{\lambda}$, we obtain
the so--called Tuned/Correlated (TC) kernel  \cite{ChenOhlssonLjung2012}
%\cite{ChenOhlssonGoodwinLjung2011}
\begin{equation}\label{eqn:TC_kernel}
    \left(\cK_{TC}\right)_{ij} := \cK_{TC}(i,j;\eta) = c \lambda^{max(i,j)}, \ c \geq 0, \, 0 \leq \lambda < 1\,.
\end{equation} which has also been introduced via a stochastic argument under the name of ``first--order stable spline (SS) kernel'' in \cite{PillonettoDeNicolao2011}.

{
\begin{remark}\label{rem:marginal-lik-maximization_discussion}
Hyperparameters can be estimated from data in a variety of ways;
If on one hand empirical evidence \cite{ChenOhlssonLjung2012, PillonettoDinuzzoChenDeNicolaoLjung2014}
as well as some theoretical results \cite{CarliChenChiusoLjungPillonetto2012} %\cite{CarliChenChiusoLjungPillonetto2012, PillonettoChiuso2015}
support the use of %the so--called
marginal likelihood maximization, this boils down to a challenging nonconvex optimization problem.
Indeed, no matter what nonconvex optimization solver is used,
tuning of the hyperparameters requires repeated computations of the marginal likelihood (for many different hyperparameters values),
whose straightforward computation requires the inversion of a $N \times N$ matrix (with a computational complexity  that scales as $O(N^{3})$)
which may be impracticable in data--rich situations.
Also numerical stability can be an issue, since computation of the marginal likelihood involves extremely ill--conditioned matrices (see \cite{ChenLjung2013} for details).
For this reason, several dedicated algorithms have been developed in the literature \cite{CarliChiusoPillonetto2012SYSID, ChenLjung2013}.
Best state--of--the--art algorithms scale as $O(n^{3})$ (recall that here $n$ is the order of the finite impulse response truncation, with $n \ll N$)
but in these algorithms, computation of the objective function as well as of its gradient and Hessian invariably requires the factorization of the kernel matrix.
Indeed, $\cK_{DC}$ has an exponentially decaying diagonal and can be very ill--conditioned if the parameters that control the decay rate are very small.
For example \cite{ChenLjung2013}, if $n=125$, the $2$--norm condition number of the DC kernel, is $2.99 \times 10^{8}$ for $\lambda=0.9$, $\rho=0.98$ and $3.84 \times 10^{29}$ for $\lambda=0.6$, $\rho=0.98$.
Closed form  factorization of the kernel matrix as well as sparsity of its inverse  as derived in Section \ref{sec:DC_as_MaxEntrkernel} can be used both to improve numerical stability  and reduce the
computational burden associated to the computation of  the marginal likelihood cost function and its gradient and Hessian.
\end{remark}
}

% ==============================
% ==============================
\section{Maximum Entropy band extension problem}\label{sec:MaxEntrBandExt}

%\textcolor{blue}
{In this section, we introduce relevant theory about the maximum entropy covariance extension problems that will be
used to prove our main results in Section \ref{sec:DC_as_MaxEntrkernel}. }
Covariance extension problems were introduced by A.~P. Dempster
\cite{Dempster1972} and studied by many authors (see e.g.
\cite{GroneJohnsonSaWolkowicz1984,DymGohberg1981,Johnson1990,GohbergGoldbergKaashoek1993,DahlVanderbergheRoychowdhury2008}
and references therein, see also \cite{CFPP-2011,CFPP-2013}
for an extension to the circulant case).
%In the literature concerning matrix completion problems, it is common practice to associate %describe
%\textcolor{blue}
{In this framework, it is customary to associate
the pattern of the specified entries of an $n\times n$ partially specified symmetric matrix to an undirected graph with $n$ vertices.
This graph has an edge joining vertex $i$ and vertex $j$ if and only if the $(i,j)$ entry is specified.
If the graph of the specified entries is \emph{chordal} (i.e., a graph in which every cycle of
length greater than three has an edge connecting nonconsecutive nodes),  %, see e.g. \cite{Golumbic-80}),
%and, in particular, if the
%specified elements lie \emph{on a band} centered along the main diagonal,
then the maximum entropy covariance extension problem admits a
closed form solution in terms of the principal minors of the matrix
to be completed (see, e.g., \cite{Lauritzen1996}). } %(see \cite{BJL-89}, \cite{FKMN-00}, \cite{NFFKM-03}).
%In this section, we briefly review some fundamental results %of
%about \emph{maximum entropy band extension problems} that will be
%used to prove our main results in Section
%\ref{sec:DC_as_MaxEntrkernel}.
%\textcolor{blue}
{In the following, we introduce the maximum entropy covariance extension problem
specializing to the case where the given entries lie on a band centered along the main diagonal.
As it is easy to see, the associated undirected graph is chordal and the maximum entropy covariance extension problem
admits a closed form solution that can be computed in a recursive way. }

\subsection{Differential Entropy}

Recall that the {\em differential entropy} $H(p)$ of a probability
density function $p$ on $\R^n$ is defined by
\begin{equation}\label{DiffEntropy}
H(p)=-\int_{\R^n}\log (p(x))p(x)dx.
\end{equation}
In  case of a zero--mean Gaussian distribution $p$ with covariance
matrix $\bSigma_n$, we get
\begin{equation}\label{gaussianentropy}
H(p)=\frac{1}{2}\log(\det\bSigma_n)+\frac{1}{2}n\left(1+\log(2\pi)\right).
\end{equation}

\subsection{Problem Statement}

We will discuss $n\times n$ matrices $\bSigma(\bx)$ where only certain elements are given and fixed, and we want to assign
the values of the remaining elements $\bx$ in such a way that $\bSigma(\bx)$
becomes a positive definite matrix with as large value as possible of $\det \bSigma(\bx)$.
In view of \eqref{gaussianentropy} this is called the \emph{maximum entropy extension problem}.

We shall in particular study the problem that the fixed values are given \emph{in a band} around the main diagonal of $\bSigma$.%, i.e. we shall solve the following problem.

\begin{problem}[\textbf{Maximum entropy band extension problem}] Let $\bSigma(\bx)$ be $n \times n$ matrix and $i=1,\dots,n$,  $j=1,\dots,n$.
For given $m$, $\sigma_{ij}$, $|i-j|\leq m$, the maximum entropy band extension problem is
\begin{subequations}\label{probl:MaxEntr}
\begin{eqnarray}
\underset{\bx}{{\rm minimize}} & \left\{ -\log \det \bSigma(\bx) \mid \bSigma(\bx) \in \cS_n \right\} \\
\text{\emph{subject to }} & \bSigma(\bx) \succ 0 \\
& \left[\bSigma(\bx)\right]_{ij} = \sigma_{ij}, \quad |i-j|\leq m
\end{eqnarray}
\end{subequations}
\end{problem}

To stress the dependence on $n$ and $m$ we shall use the notation $\bSigma^{(m)}_n$ and suppress the argument $\bx$.
The problem is a convex optimization problem since we are minimizing a strictly convex function on the intersection of a convex cone (minus the zero matrix) with a linear manifold.
% and the optimal matrix will be denoted by $\bSigma^{(m),o}_n$.
We will denote by $\bx^o$ its optimal value and by $\bSigma_n^{(m),o}\equiv\bSigma_n^{(m)}(\bx^o)$ the associated optimal extension.

%\textcolor{blue}
{%``neat'' property
\begin{remark}
As explained by Dempster in his seminal work \cite{Dempster1972}, ``the principle of seeking maximum entropy is a principle of seeking maximum simplicity of explanation''.
The inverse of the maximum entropy extension has zeros in the complementary position of those assigned (see Theorem \ref{thm:feasibility_and_bandedness}, point (ii), below).
As the elements of the inverse of the covariance matrix (aka the precision matrix) represents the natural parameters of the Gaussian distribution, % of the model
the maximum entropy completion obeys a ``principle of parsimony in parametric model fitting, which suggests that parameters should be introduced sparingly
and only when the data indicate they are required'' \cite{Dempster1972}.
Notice that %$[\Sigma^{-1}]_{ij}=0$
setting the $(i,j)$--th element of the inverse covariance matrix to zero has the probabilistic  interpretation that the $i$--th and $j$--th components of the underlying Gaussian random vector are conditionally independent given the other components.
%Thus the choice of setting the elements of the inverse to zero
This choice may at first look less natural than setting the unspecified elements of $\bSigma$ to zero. It has nevertheless considerable advantages compared to the latter, cf. \cite[p. 161]{Dempster1972}.
\end{remark}
}

\subsection{Basic results for banded matrices}

\begin{theorem}[\cite{Dempster1972, DymGohberg1981}] %Consider Problem \eqref{probl:MaxEntr}. The following holds:
\label{thm:feasibility_and_bandedness}
\begin{itemize}
    \item[$(i)$] \emph{Feasibility:} Problem \eqref{probl:MaxEntr} is feasible,
    namely $\bSigma_n^{(m)}$ admits a positive definite extension if and only if
    \begin{equation}\label{eqn:CNS_A_admits_posdef_ext}
    \begin{bmatrix}
    \sigma_{i,i} & \cdots & \sigma_{i,m+i} \\
    \vdots & & \vdots \\
    \sigma_{m+i,i} & \cdots & \sigma_{m+i,m+i}
    \end{bmatrix}
    \succ 0,
    \;\,\,
    i=1, \ldots, n-m
    \end{equation}
    \item[$(ii)$] \emph{Bandedness: }
    Assume \eqref{eqn:CNS_A_admits_posdef_ext} holds.
    Then    \eqref{probl:MaxEntr} admits a unique solution with the additional property that its inverse is banded
    of bandwidth $m$, namely the $(i,j)$--th entries of $\left(\bSigma_n^{(m),o}\right)^{-1}$
    are zero for $|i-j|>m$.
\end{itemize}
\end{theorem}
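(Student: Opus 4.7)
The plan is to treat (i) and (ii) in sequence: part (i) is a feasibility claim that I would establish by a recursive one-step extension, and part (ii) follows from strict convexity of the objective together with first-order optimality for the free entries.

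\emph{Part (i).} Necessity is immediate: positive definiteness is inherited by principal submatrices, and for each $i=1,\dots,n-m$ the block in \eqref{eqn:CNS_A_admits_posdef_ext} is the principal submatrix indexed by $\{i,\dots,i+m\}$, whose entries all lie in the band and are therefore specified. Sufficiency I would prove by induction on $n$, with the base case $n=m+1$ being the $i=1$ case of \eqref{eqn:CNS_A_admits_posdef_ext}. For the inductive step, the hypothesis applied to the top-left $(n-1)\times(n-1)$ sub-problem yields a positive definite completion $A\in\cS_{n-1}$. Writing the target as
\[
\bSigma=\begin{bmatrix} A & b \\ b^\top & \sigma_{nn}\end{bmatrix},
\]
the last $m$ entries of $b\in\R^{n-1}$ are specified (call that sub-vector $c$) and the first $n-1-m$ are free. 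By the Schur complement criterion, $\bSigma\succ 0$ iff $\sigma_{nn}-b^\top A^{-1}b>0$. A blockwise minimisation of $b^\top A^{-1}b$ over the free entries, via the Schur complement block identity for $A^{-1}$, gives the value $c^\top D^{-1}c$, where $D:=A(n-m:n-1)$ is the $m\times m$ block common to $A$ and to the specified bottom-right $(m+1)\times(m+1)$ block $\bSigma(n-m:n)$. Positive definiteness of this specified block (case $i=n-m$ of \eqref{eqn:CNS_A_admits_posdef_ext}) yields $c^\top D^{-1}c<\sigma_{nn}$ by one more Schur complement, so the minimising choice of the free entries produces $\bSigma\succ 0$.

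\emph{Part (ii).} Feasibility from (i), strict convexity of $-\log\det\bSigma$ on the open positive definite cone, Hadamard's inequality $\det\bSigma\leq\prod_i\bSigma_{ii}$ (which together with the specified diagonal bounds the objective from below), and the barrier property $-\log\det\bSigma\to+\infty$ at the boundary of the cone jointly imply the existence of a unique optimum in the interior of the cone. For bandedness, I would write first-order optimality for each free variable: using $\partial\log\det\bSigma/\partial\bSigma=\bSigma^{-1}$ together with the symmetry of $\bSigma$, the derivative along the off-diagonal free coordinate $(i,j)$ with $i<j$ and $|i-j|>m$ equals $2(\bSigma^{-1})_{ij}$; stationarity therefore forces $(\bSigma^{-1})_{ij}=0$ for every $|i-j|>m$, which is the claimed bandedness.

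The main obstacle is the one-step extension in (i): one must show that the minimum of $b^\top A^{-1}b$ over the free entries of $b$ is exactly $c^\top D^{-1}c$ and, crucially, that this value depends only on the specified data, not on the particular $A$ produced by the induction. The underlying identity is $R-Q^\top P^{-1}Q=D^{-1}$, where $A^{-1}=\begin{bmatrix} P & Q\\ Q^\top & R\end{bmatrix}$ is partitioned conformally with the free/specified split of $b$ and $D$ is the corresponding block of $A$. Since the entries of $A(n-m:n-1)$ all lie within the band and are hence determined by the data, $D$ is fixed, so the bound closes uniformly in the choice of $A$ and the recursion goes through. As a side remark, the same minimising choice also produces a last row/column of $\bSigma^{-1}$ supported in the band, foreshadowing the bandedness conclusion of (ii).
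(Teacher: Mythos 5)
The paper does not prove this theorem at all --- it is quoted with citations to Dempster and Dym--Gohberg --- so your proposal has to be judged on its own merits. It is essentially the classical argument and is substantially correct. The necessity direction of (i) is fine, and the inductive one-step extension is the right mechanism: the key observation that $D=A(n{-}m{:}n{-}1)$ lies entirely inside the band (its indices differ by at most $m-1$), so that the bound $\min_f b^\top A^{-1}b = c^\top D^{-1}c < \sigma_{nn}$ is uniform over whichever completion $A$ the induction hands you, is exactly what makes the recursion close, and your block-inverse identity $R-Q^\top P^{-1}Q=D^{-1}$ is correct. The stationarity computation in (ii), giving $(\bSigma^{-1})_{ij}=0$ for every free coordinate, is also correct and is the standard route to bandedness.

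The one genuine gap is in the existence part of (ii). A strictly convex function that is bounded below on an open convex set need not attain its infimum, so ``Hadamard bounds the objective from below'' plus ``$-\log\det$ blows up at the boundary of the cone'' does not by itself yield a minimizer: you must also rule out minimizing sequences escaping to infinity in the free variables. The missing (and easy) ingredient is that the feasible set is \emph{bounded}: the diagonal lies in the band and is therefore specified, and any positive definite $\bSigma$ satisfies $|\bSigma_{ij}|\le\sqrt{\sigma_{ii}\sigma_{jj}}$, so all entries of any feasible completion are uniformly bounded. Sublevel sets of the objective are then compact subsets of the open cone (by the barrier property), and existence follows; uniqueness then comes from strict convexity as you say. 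Alternatively, you could sidestep the variational existence argument entirely by observing that your own construction in part (i), iterated, already produces a feasible point whose inverse has the banded structure, and then verify directly that a feasible point with banded inverse is the (necessarily unique) stationary point of the convex problem --- this is closer in spirit to the recursive closed-form solution the paper records as Theorem \ref{thm:closed_form_central_extension}.
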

The positive definite maximum entropy extension $\bSigma_n^{(m),o}$
is also called \emph{central extension} of $\bSigma_n^{(m)}$.

For banded sparsity patterns like those considered so far, Problem
\eqref{probl:MaxEntr} admits a closed form solution that can be
computed recursively in the following way.
We start by considering a partially specified $n \times n$ symmetric matrix of bandwidth $(n-2)$
\begin{equation}\label{eqn:def_Sigma0}
\bSigma_{n}^{(n-2)} = \bmat
\sigma_{1,1} & \sigma_{1,2} & \ldots & \sigma_{1,n-1} & x \\
\sigma_{1,2} & \sigma_{2,2} & \ldots & \sigma_{2,n-1} & \sigma_{2,n} \\
\vdots & \vdots  & & \vdots & \vdots \\
\sigma_{1,n-1} & \sigma_{2,n-1} & \ldots & \sigma_{n-1,n-1} & \sigma_{n-1,n} \\
x & \sigma_{2,n} & \ldots & \sigma_{n-1,n} & \sigma_{n,n}\\
\emat
\end{equation}
and consider the submatrix
\begin{equation}\label{eqn:LMR}
L = [\sigma_{ij}]_{i,j = 1}^{n-1}\,. %, \qquad M=[a_{ij}]_{i,j=1}^{n-1}, \qquad R=[a_{ij}]_{i,j=1}^{n}
\end{equation}
We call \emph{one--step extension} the extension of a $n \times n$,
$(n-2)$--band matrix. The next theorem gives a recursive algorithm
to compute the extension
of a partially specified matrix of generic bandwidth $m$ by
computing the one--step extension
of suitable submatrices. It also gives a representation of the solution in factored form.

\begin{theorem}[\cite{GohbergGoldbergKaashoek1993}, \cite{DymGohberg1981}]
\label{thm:closed_form_central_extension}
\begin{itemize}
    \item[$(i)$] The one--step central extension of $\bSigma_{n}^{(n-2)}$ is given by
 \begin{equation}\label{eqn:z0}
x^o = - \frac{1}{y_1} \sum_{j=2}^{n-1} \sigma_{nj} y_j
\end{equation}
with
\begin{equation}\label{eqn:y}
\bmat  y_1& y_2 & \dots & y_{n-1} \\ \emat^\top = L^{-1} \bmat 1 & 0 &
\dots & 0  \emat^\top \,.
\end{equation}
Let $\bSigma_n^{(m)}$ be an $n \times n$ partially specified
$m$--band matrix.
%, i.e. such that $\sigma_{ij}=0$ for $|i-j|>m$.
The central extension $\bC = [c_{ij}]_{i,j=1}^{n} $ of
$\bSigma_n^{(m)}$ is such that
%has the  property that
for all $m+1 < t \leq n$ and $1 \leq s \leq t-m-1$ the submatrices
$C(s:t)$
%\begin{equation}\label{eqn:prop_central_ext}
%C(\left\{s,\dots,t\right\}) =
%\begin{bmatrix}
%c_{s,s} & \cdots & c_{s,t} \\
%\vdots & & \vdots \\
%c_{t,s} & \cdots & c_{t,t}
%\end{bmatrix}
%\end{equation}
are the central one--step extensions of the corresponding
$(t-s-1)$--band matrix.

    \item[$(ii)$] In particular, the central extension of the partially specified symmetric $m$-band matrix $\bSigma_n^{(m)}$ admits the factorization
\begin{equation}\label{eqn:central_extension_factored_form}
\bC = \left(L_n^{(m)}V U_n^{(m)}\right)^{-1}
\end{equation}
where $L_n^{(m)} = \left[\ell_{ij}\right]$ is a lower triangular
banded matrix with ones on the main diagonal, $\ell_{jj} = 1$, for
$j=1, \dots, n$, and
\begin{equation}\label{eqn:Xm}
\bmat \ell_{\alpha j}\\ \vdots \\\ell_{\beta j} \emat =
- \bmat \sigma_{\alpha \alpha} & \dots & \sigma_{\alpha \beta}\\
\vdots & \ddots  &  \vdots\\
\sigma_{\beta \alpha} & \dots & \sigma_{\beta \beta} \emat^{-1}
\bmat \sigma_{\alpha j}\\ \vdots \\\sigma_{\beta j} \emat %\quad
\end{equation}
for $j=1, \dots, n-1$, $U_n^{(m)} = \left(L_n^{(m)} \right)^\top$
and $V=\left[v_{ij}\right]$  diagonal with entries
\begin{equation}\label{eqn:V}
v_{jj} = \left(\bmat \sigma_{jj} & \dots & \sigma_{j \beta}\\
\vdots & \ddots  &  \vdots\\
\sigma_{\beta j} & \dots & \sigma_{\beta \beta}
\emat^{-1}\right)_{1,1}\,, %\quad  j=1, \dots, n
\end{equation}
for $j=1, \dots, n$, where %$\alpha = \alpha(j) = j+1$, for $j=1, \dots, n-1$ and $\beta = \beta(j) = \min(j+m,n)$, for $j = 1, \dots, n$.
$$
\begin{array}{lll}
\alpha = \alpha(j) = j+1 & \text{ for } & j=1, \dots, n-1\,,\\
\beta = \beta(j) = \min(j+m,n) & \text{ for } & j = 1, \dots, n\,.
\end{array}
$$
\end{itemize}
\end{theorem}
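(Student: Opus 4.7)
The plan is to handle part (i) via a direct Schur-complement calculation for the one-step extension and then propagate it recursively along the off-band diagonals, and to derive part (ii) by combining the bandedness of $\bC^{-1}$ guaranteed by Theorem~\ref{thm:feasibility_and_bandedness} with the local Markov structure of the associated Gaussian graphical model.

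For the one-step extension, I would write $\bSigma_n^{(n-2)}$ as a block matrix whose leading $(n-1)\times(n-1)$ principal submatrix is the fully specified $L$ from \eqref{eqn:LMR}, whose last column is $u(x) := [x,\sigma_{2,n},\dots,\sigma_{n-1,n}]^\top$, and whose $(n,n)$ entry is $\sigma_{n,n}$. Feasibility forces $L \succ 0$, so the Schur-complement identity gives
\begin{equation*}
\det \bSigma_n^{(n-2)}(x) = \det L \cdot \bigl(\sigma_{n,n} - u(x)^\top L^{-1} u(x)\bigr),
\end{equation*}
reducing the maximization of $\log\det$ to the minimization of the convex quadratic $u(x)^\top L^{-1} u(x)$ in the single variable $x$. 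Setting its derivative to zero yields $e_1^\top L^{-1} u(x) = 0$; writing $y = L^{-1} e_1$ and using the symmetry of $L^{-1}$ this becomes $y_1 x + \sum_{j=2}^{n-1} y_j \sigma_{j,n} = 0$, which is exactly \eqref{eqn:z0}--\eqref{eqn:y}. For the general $m$-band case, I would fill the unspecified entries ``one diagonal at a time,'' at stage $k\ge 1$ completing every entry at distance $m+k$ from the main diagonal: at each such step the relevant submatrix $\bC(s:t)$ has bandwidth $(t-s-1)$ and a single pair of unknowns in its corners, so the one-step formula applies. Chordality of the band graph and uniqueness in Theorem~\ref{thm:feasibility_and_bandedness}(ii) then identify the result with the true central extension, since the inverse of the matrix built in this way is $m$-banded by construction.

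For part (ii), Theorem~\ref{thm:feasibility_and_bandedness}(ii) already supplies the key structural fact: $\bC^{-1}$ is $m$-banded and positive definite. Such a matrix admits a unique $\bC^{-1} = L V L^\top$ decomposition with $L$ unit lower triangular and $V$ diagonal, and it is standard that Cholesky preserves bandwidth, so $\ell_{ij}=0$ whenever $i > j+m$. To obtain the closed forms \eqref{eqn:Xm} and \eqref{eqn:V}, I would use conditional independence: bandedness of the precision $\bC^{-1}$ means that under the Gaussian law with covariance $\bC$, coordinate $j$ is independent of coordinates beyond $\beta(j)$ given coordinates $j+1,\dots,\beta(j)$, so its conditional law given everything after $j$ depends only on the submatrix $\bSigma(j:\beta(j))$. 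The standard Cholesky-of-precision identities then equate the subdiagonal entries of the $j$-th column of $L$ with the negated regression coefficients of coordinate $j$ on coordinates $j+1,\dots,\beta(j)$ and $v_{jj}$ with the reciprocal conditional variance; applying the block-inverse formula to $\bSigma(j:\beta(j))$ delivers \eqref{eqn:Xm} and \eqref{eqn:V} verbatim.

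The main obstacle, in my view, is the localization step in part (ii): one must argue that the Cholesky factors of $\bC^{-1}$ depend only on the \emph{originally specified} principal submatrices $\bSigma(j:\beta(j))$ and not on the filled-in entries of $\bC$. This is precisely where the chordal/Markov structure is essential, and the proof should invoke it cleanly rather than attempt to manipulate the (only implicitly known) entries of $\bC$ directly.
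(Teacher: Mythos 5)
The paper does not prove this theorem at all: it is imported verbatim from the band-method literature (Dym--Gohberg, Gohberg--Goldberg--Kaashoek), so there is no in-paper argument to compare against. Judged on its own, your reconstruction is essentially the standard proof and most of it is sound. The Schur-complement computation for the one-step extension is correct (note $y_1=e_1^\top L^{-1}e_1>0$ justifies the division, and the same block-inverse formula shows that the stationarity condition $e_1^\top L^{-1}u(x)=0$ is exactly the condition that the $(1,n)$ entry of $\bSigma_n^{(n-2)}(x)^{-1}$ vanishes, which ties part (i) to Theorem~\ref{thm:feasibility_and_bandedness}(ii)). Your treatment of part (ii) --- backward $LDL^\top$ of the $m$-banded precision, bandwidth preservation of the factor, identification of the columns of $L$ with negated backward regression coefficients and of $v_{jj}$ with reciprocal conditional variances, and localization to $\bSigma(j{:}\beta(j))$ via the Markov property --- is the right argument, and you correctly isolate the localization step as the point where chordality/separation must be invoked.

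The one genuine gap is in the recursive claim of part (i): you assert that ``the inverse of the matrix built in this way is $m$-banded by construction.'' It is not --- that is precisely the nontrivial content of the Dym--Gohberg theorem. Each one-step extension only forces the corner of the inverse of the \emph{local} submatrix $C(s{:}t)$ to vanish, i.e.\ $x_s\perp x_t\mid x_{s+1},\dots,x_{t-1}$; concluding that $(\bC^{-1})_{st}=0$, i.e.\ $x_s\perp x_t$ given \emph{all} remaining coordinates, requires an additional Markov-type argument. The cleaner route runs in the opposite direction: start from the central extension $\bC$ (unique with $m$-banded inverse by Theorem~\ref{thm:feasibility_and_bandedness}), observe that in the band graph the set $\{s+1,\dots,t-1\}$ separates $s$ from $t$ whenever $t-s>m$ (any edge crossing it would have length $>m$), so the global Markov property gives $x_s\perp x_t\mid x_{s+1:t-1}$, hence the corner of $C(s{:}t)^{-1}$ vanishes and $C(s{:}t)$ is the one-step central extension of its band part. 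Uniqueness of the one-step extension then shows your diagonal-by-diagonal construction reproduces $\bC$, closing the loop without having to establish bandedness of the inverse of the constructed matrix directly.
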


% ==============================
% ==============================

\section{Maximum Entropy properties of the DC kernel}\label{sec:DC_as_MaxEntrkernel}

%\textcolor{blue}
{We provide a proof of the maximum entropy property
of the DC kernel. In particular, we prove that the DC kernel
maximizes the entropy among all kernels that satisfy the moment
constraints \eqref{eqn:moment_contraints_DC}.
The proof relies on the theory of matrix extension problems in Section \ref{sec:MaxEntrBandExt}.
This argument naturally leads to a closed form expression for the inverse of the DC kernel as well as to a closed--form factorization and determinant both for the kernel and its inverse.
%, that otherwise would have required tedious hand calculations.
}

\begin{proposition}\label{prop:MaxEntr_for_DCkernel}
Consider Problem \eqref{probl:MaxEntr} with $m=1$ and
\begin{equation}\label{eqn:moment_contraints_DC}
\sigma_{ij} = \left(\cK_{DC}\right)_{ij} = \rho^{\left|i-j\right|}
\lambda^{\frac{i+j}{2}}, \quad |i-j|  \leq 1
\end{equation}
i.e. consider the partially specified $1$--band matrix
    $$
    \bSigma_n^{(1)}(\bx) = \bmat
    \lambda & \lambda^{\frac32} \rho & x_{13} & \dots & \dots & x_{1n}\\
    \lambda^{\frac32} \rho & \lambda^2 & \lambda^{\frac52} \rho & x_{24}  & \dots & x_{2n}\\
    x_{13} & \lambda^{\frac52} \rho & \lambda^3 & \lambda^{\frac72} \rho&  & \vdots \\
    \vdots &  & \ddots & \ddots & \ddots & x_{n-2,1}\\
    \vdots &  &  & \lambda^{\frac{2n-3}{2}} \rho & \lambda^{n-1} & \lambda^{\frac{2n-1}{2}} \rho\\
    x_{1n} & \dots & \dots & x_{n-2,1} & \lambda^{\frac{2n-1}{2}} \rho & \lambda^n
    \emat
    $$
Then $\bSigma_n^{(1)}(\bx^o) = \cK_{DC}$, i.e. the solution of the
Maximum Entropy Problem \eqref{probl:MaxEntr} coincides with the DC
kernel \eqref{eqn:DC_kernel}.
\end{proposition}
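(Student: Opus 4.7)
The plan is to invoke the uniqueness portion of Theorem \ref{thm:feasibility_and_bandedness}(ii): it suffices to exhibit a positive definite completion of $\bSigma_n^{(1)}$ whose inverse is tridiagonal, and verify that this completion coincides with $\cK_{DC}$. Since any such completion is \emph{automatically} the unique maximizer of the entropy (equivalently, the unique minimizer of $-\log\det$) under the banded constraints, we will not need to solve the optimization problem in \eqref{probl:MaxEntr} directly.

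First, I would observe that $\cK_{DC}$ trivially satisfies the moment constraints \eqref{eqn:moment_contraints_DC}: by definition, its $(i,j)$ entry equals $c\lambda^{(i+j)/2}\rho^{|i-j|}$ for all $i,j$, and in particular for $|i-j|\leq 1$. (The constant $c$ does not appear in \eqref{eqn:moment_contraints_DC} because the statement of the proposition has implicitly absorbed it into the parameterization; this is a cosmetic normalization and does not affect the argument.) So $\cK_{DC}$ is a feasible candidate as soon as positive definiteness is verified.

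The key step is to exhibit the factorization
\begin{equation*}
\cK_{DC} = c\, D R D, \qquad D = \mathrm{diag}(\lambda^{1/2}, \lambda^{2/2}, \dots, \lambda^{n/2}), \quad R_{ij} = \rho^{|i-j|},
\end{equation*}
where $R$ is the standard Kac--Murdock--Szeg\H{o} (AR(1)) covariance matrix. For $|\rho|<1$ and $\lambda>0$, $R\succ 0$ and $D\succ 0$, hence $\cK_{DC}\succ 0$. Moreover, a classical computation (which I would either quote or verify by direct multiplication) gives
\begin{equation*}
R^{-1} = \frac{1}{1-\rho^2} \begin{bmatrix} 1 & -\rho & & & \\ -\rho & 1+\rho^2 & -\rho & & \\ & \ddots & \ddots & \ddots & \\ & & -\rho & 1+\rho^2 & -\rho \\ & & & -\rho & 1 \end{bmatrix},
\end{equation*}
which is tridiagonal. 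Consequently $(\cK_{DC})^{-1} = c^{-1} D^{-1} R^{-1} D^{-1}$ is also tridiagonal, i.e.\ banded with bandwidth $m=1$.

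Finally I would check feasibility of Problem \eqref{probl:MaxEntr} via \eqref{eqn:CNS_A_admits_posdef_ext}: for $m=1$ the condition reduces to positive definiteness of each consecutive $2\times 2$ principal submatrix along the band, whose determinant is $c^2\lambda^{2i+1}(1-\rho^2)>0$ for $|\rho|<1$. With feasibility established, Theorem \ref{thm:feasibility_and_bandedness}(ii) guarantees a unique solution to \eqref{probl:MaxEntr} characterized by tridiagonality of its inverse. Since $\cK_{DC}$ is a positive definite extension of $\bSigma_n^{(1)}$ with tridiagonal inverse, uniqueness forces $\bSigma_n^{(1)}(\bx^o)=\cK_{DC}$, which proves the claim. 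The main (and essentially only) technical obstacle is verifying tridiagonality of $R^{-1}$; this is a standard fact but in a self-contained exposition I would either compute $R\,R^{-1}$ entry-wise, or derive it via the explicit Cholesky factorization of $R$ coming from the AR(1) recursion $x_k = \rho x_{k-1} + \sqrt{1-\rho^2}\,\varepsilon_k$, which simultaneously yields the factored form promised by Theorem \ref{thm:closed_form_central_extension}(ii) for later use.
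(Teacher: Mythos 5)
Your proof is correct in substance but takes a genuinely different route from the paper. The paper proves the proposition by induction on $n$, repeatedly applying the one--step central extension formulas \eqref{eqn:z0}--\eqref{eqn:y} of Theorem \ref{thm:closed_form_central_extension} and checking that each recursively computed entry $x_{s,k+1}^o$ equals $\cK_{DC}(s,k+1)$. You instead produce a \emph{certificate}: you exhibit $\cK_{DC}=c\,DRD$ with $D$ diagonal and $R$ the Kac--Murdock--Szeg\H{o} matrix, deduce positive definiteness and tridiagonality of $\cK_{DC}^{-1}$ from the classical tridiagonal inverse of $R$, and conclude by the uniqueness of the banded--inverse completion. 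Your argument is shorter, avoids the adjoint computation hidden in the paper's inductive step, and as a bonus delivers the content of Proposition \ref{prop:strucproperty4DCkernel} (the explicit inverse and factorization) essentially for free, since conjugating the known $R^{-1}$ by $D^{-1}$ immediately gives \eqref{eqn:KDC_inv}.

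One point needs to be made explicit. Theorem \ref{thm:feasibility_and_bandedness}$(ii)$ as stated is a one--way implication: the maximum entropy solution exists, is unique, and has a banded inverse. You use the converse --- that \emph{any} positive definite completion with tridiagonal inverse must be that solution. This converse is true and standard (it is the Dempster/Dym--Gohberg characterization: the zero pattern of the inverse on the complementary entries is exactly the first--order optimality condition of the strictly convex problem \eqref{probl:MaxEntr}, since the gradient of $-\log\det\bSigma$ in an unspecified entry $x_{ij}$ is $-2(\bSigma^{-1})_{ij}$, so a feasible point with banded inverse is a stationary, hence the unique global, minimizer). But it is not literally what the quoted theorem says, so you should either add this one--line convexity argument or cite the equivalence explicitly; as written, this step is a small logical gap rather than a mathematical one. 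The remaining details (feasibility via the $2\times 2$ minors $c^2\lambda^{2i+1}(1-\rho^2)>0$, the restriction to $|\rho|<1$ and $\lambda>0$ needed for strict positive definiteness, and the harmless normalization $c=1$ in the displayed $\bSigma_n^{(1)}$) are all handled correctly.
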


\begin{proof}
    By Theorem \ref{thm:closed_form_central_extension},
    the maximum entropy completion of $\bSigma_n^{(1)}(\bx)$ can be recursively computed
    starting from the maximum entropy completions of the nested principal submatrices of smaller size.
    The statement can thus be proved by induction on the dimension $n$ of the completion.
    \begin{itemize}
        \item Let $n=3$, then by \eqref{eqn:z0}, \eqref{eqn:y}, the central extension of
    $$
    %\bSigma_3^{(1)}(x) =
    \bmat \lambda & \lambda^{\frac32}\rho & x_{13}\\
    \lambda^{\frac32}\rho &\lambda^2 & \lambda^{\frac52}\rho \\
    x_{13} & \lambda^{\frac52}\rho &\lambda^3\emat
    $$
    is given by $x_{13}^o= \lambda^2 \rho^2 = \cK_{DC}(1,3)$, as claimed.

    \item  Now assume that the statement holds for $n=k$, $k \geq 3$, i.e. that
    $\cK_{DC}(\left\{1,\dots, k\right\})$ is the central extension of $\bSigma_n^{(1)}(\left\{1,\dots, k\right\})$.
    We want to prove that $\cK_{DC}(\left\{1,\dots, k+1\right\})$
    is the central extension of $\bSigma_{n}^{(1)}(\left\{1,\dots, k+1\right\})$.
    To this aim, we only need to prove that the $(k-1)$ submatrices $\cK_{DC}(\left\{s,\dots, k+1\right\})$, $1 \leq s \leq k-1$,
    are the central one--step extensions of the corresponding $(k-s)$--band matrices
  $$
    \bmat
    \lambda^s & \dots & \lambda^{\frac{s+k}{2}\rho^{|k-s|}} & x_{s,k+1}\\
     \vdots & \ddots &  & \lambda^{\frac{s+k+2}{2}\rho^{|k-s|}}\\
     &  & \ddots & \vdots \\
    x_{s,k+1} & \lambda^{\frac{s+k+2}{2}\rho^{|k-s|}} & \dots & \lambda^{k+1}
    \emat\,,
    $$
    or, equivalently, that $x_{s,k+1}^o = \cK_{DC}(s,k+1)$, for $s=1, \dots, k-1$.
    In order to find $x_{s,k+1}^o$, we consider \eqref{eqn:y}, which, by the inductive hypothesis, becomes
    \begin{equation*}
    \bmat  y_1^{(s,k+1)}\\ y_2^{(s,k+1)} \\ \vdots \\ y_{k-s+1}^{(s,k+1)} \\ \emat
    = \cK_{DC}(\left\{s, \dots, k\right\})^{-1} \bmat 1 \\ 0 \\ \vdots \\ 0 \\ \emat \,.
    \end{equation*}
    %with $L = \cK(\left\{s, \dots, k\right\})$.
    By considering the adjoint of $\cK_{DC}(\left\{s, \dots, k\right\})$ %and exploiting the properties of the determinant,
    one can see that
    $y_2^{(s,k+1)} = - \frac{\rho}{\sqrt{\lambda}} y_1^{(s,k+1)}$
    while all the others $y_{i}^{(s,k+1)}$, $i=3, \dots, k$ are identically zero. %$y_3^{s,k+1} = \dots = y_{n-1}^{s,k+1} = 0$
    It follows that
    \begin{align*}
    x_{s,k+1}^o &= -\frac{1}{y_1^{(s,k+1)}} \, y_2^{(s,k+1)} \lambda^{\frac{s+k+2}{2}} \rho^{k-s} \\
    &=  \frac{\rho}{\sqrt{\lambda}} \lambda^{\frac{s+k+2}{2}} \rho^{k-s}% = \lambda^{\frac{s+k+1}{2}} \rho^{k-s+1} \\
    %&
    = \cK_{DC}(s,k+1).
    \end{align*}
\end{itemize}\end{proof}

\begin{proposition}\label{prop:strucproperty4DCkernel}
%\textcolor{blue}
{The DC kernel \eqref{eqn:DC_kernel}  satisfies the properties: }

\begin{itemize}
    \item[$(i)$] $\cK_{DC}$ admits the factorization
\begin{equation}\label{eqn:fatt_KDC}
\cK_{DC} = U W U^\top
\end{equation}
with $U$ upper triangular and Toeplitz given by
\begin{equation}\label{eqn:U}
U = \begin{cases}
1 & \text{if } \; i=j\\
0 & \text{if } \; i > j \\
\frac{\rho^{j-i}}{\lambda^{(j-i)/2}} & \text{ \emph{otherwise}}
\end{cases}
\end{equation}
and $W$ diagonal given by
\begin{equation}\label{eqn:W}
W = {(1-\rho^2)}{\rm diag}\left\{\lambda,\lambda^2,\dots
,\lambda^{n-1},\frac{\lambda^n}{1-\rho^2}\right\}\,.
\end{equation}

\item[$(ii)$]
$\cK_{DC}^{-1}$ is a tridiagonal banded matrix given by
\begin{equation}\label{eqn:KDC_inv}
\left(\cK_{DC}^{-1}\right)_{i,j} = \frac{c_{ij}}{1-\rho^2}
(-1)^{i+j} \lambda^{-\frac{i+j}{2}} \rho^{\left|i-j\right|}
\end{equation}
where
\begin{equation}\label{eqn:cij_KDC_inv}
c_{ij} = \begin{cases}
0 & \text{ if } \left|i-j\right|>1,\\
1+\rho^2 & \text{ if } i=j=2, \dots, n-1,  \\
1 & \text{\emph{ otherwise. }}
\end{cases}
\end{equation}
%\textcolor{blue}
{and factors as %admits the factorization
\begin{equation}\label{eqn:sol_MaxEnt_per_DCkernel}
\cK_{DC}^{-1} = L V L^{\top}
\end{equation}
with $L$ Toeplitz lower triangular and banded, given by
%\begin{equation}\label{eqn:Lm_DCkernel}
%L_n^{(1)} = \left(U_n^{(1)}\right)^\top = \bmat
% 1 & 0 & 0 & \dots & 0 \\
% -\frac{\rho }{\sqrt{\lambda }} & 1 & 0 & \dots & 0 \\
% 0 & \ddots  & \ddots & \ddots & \vdots \\
% \vdots & \ddots & \ddots & \ddots & 0 \\
% 0 & \dots & 0 & -\frac{\rho }{\sqrt{\lambda }} & 1
%\emat
%\end{equation}
\begin{equation}\label{eqn:Lm_DCkernel}
L = \begin{cases}
1 & \text{if } \; i=j \\
-\frac{\rho}{\sqrt{\lambda}} & \text{if } \; j=i-1 \\
0 & \text{otherwise}
\end{cases}
\end{equation}
and $V$ diagonal and given by
\begin{equation}\label{eqn:V_DCkernel}
V = \frac{1}{(1-\rho^2)}{\rm diag}\left\{\frac{1}{\lambda}\,,
\frac{1}{\lambda^2}\,, \dots, \frac{1}{\lambda^{n-1}}\,,
\frac{1-\rho^2}{\lambda^{n}}\right\}\,.
\end{equation}}

\item[$(iii)$] %\textcolor{blue}
{The DC kernel has determinant
\begin{equation}\label{eqn:det_K_DC}
{\rm det}\left(\cK_{DC}\right) = \lambda^{\frac{n(n+1)}{2}}
\left(1-\rho^2\right)^{n-1}\,.
\end{equation}}
\end{itemize}
\end{proposition}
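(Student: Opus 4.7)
The plan is to leverage Proposition \ref{prop:MaxEntr_for_DCkernel}, which identifies $\cK_{DC}$ as the central extension of the $1$-banded matrix with entries $\sigma_{ij} = \rho^{|i-j|}\lambda^{(i+j)/2}$, and then specialize the closed-form expressions of Theorem \ref{thm:closed_form_central_extension}(ii) to the case $m=1$. Part (ii) will come out first (as a direct specialization), and parts (i) and (iii) will then follow by inversion and by taking determinants.

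First I would apply Theorem \ref{thm:closed_form_central_extension}(ii) with $m=1$, for which $\alpha(j) = j+1$ and $\beta(j) = \min(j+1,n)$. For $j = 1,\dots,n-1$ the matrix in \eqref{eqn:Xm} reduces to the scalar $[\sigma_{j+1,j+1}] = [\lambda^{j+1}]$, so the only nonzero off-diagonal entry of $L_n^{(1)}$ on row $j+1$ is
\[
\ell_{j+1,j} \;=\; -\frac{\sigma_{j+1,j}}{\sigma_{j+1,j+1}} \;=\; -\frac{\rho\,\lambda^{(2j+1)/2}}{\lambda^{j+1}} \;=\; -\frac{\rho}{\sqrt{\lambda}},
\]
which gives exactly $L$ as in \eqref{eqn:Lm_DCkernel}. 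Likewise, the $2\times 2$ matrix in \eqref{eqn:V} for $j=1,\dots,n-1$ is
\(
\bigl[\begin{smallmatrix}\lambda^j & \rho\lambda^{(2j+1)/2}\\ \rho\lambda^{(2j+1)/2} & \lambda^{j+1}\end{smallmatrix}\bigr]
\),
whose determinant is $\lambda^{2j+1}(1-\rho^2)$; hence its $(1,1)$ entry of the inverse equals $\lambda^{j+1}/\bigl[\lambda^{2j+1}(1-\rho^2)\bigr] = 1/[(1-\rho^2)\lambda^j]$. The boundary case $j=n$ is a $1\times 1$ matrix and yields $v_{nn} = 1/\lambda^n$. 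This is precisely \eqref{eqn:V_DCkernel} and therefore establishes the factorization \eqref{eqn:sol_MaxEnt_per_DCkernel}. The explicit tridiagonal form \eqref{eqn:KDC_inv}-\eqref{eqn:cij_KDC_inv} then follows by a direct multiplication $LVL^\top$, using that $L$ is bidiagonal (bandwidth $1$): for $i=j$, one sums the contribution of rows $j$ and $j+1$ of $L$ yielding $(1+\rho^2)/[(1-\rho^2)\lambda^j]$ in the interior and the corner terms at $i=j=1,n$; for $|i-j|=1$ one obtains a single cross-term of sign $(-1)^{i+j}$; for $|i-j|>1$ the product vanishes by the bandedness of $L$, in agreement with Theorem \ref{thm:feasibility_and_bandedness}(ii).

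Part (i) follows by inverting \eqref{eqn:sol_MaxEnt_per_DCkernel}: $\cK_{DC} = L^{-\top} V^{-1} L^{-1}$, so I set $U := L^{-\top}$ and $W := V^{-1}$. Because $L$ is lower bidiagonal Toeplitz with unit diagonal and subdiagonal $-\rho/\sqrt{\lambda}$, a standard calculation (e.g.\ summing the Neumann series, which truncates after finitely many terms) gives $L^{-1}$ as lower triangular Toeplitz with entry $(\rho/\sqrt{\lambda})^{i-j}$ in position $(i,j)$ for $i\geq j$. Transposing produces \eqref{eqn:U}, and $W = V^{-1}$ is immediate from \eqref{eqn:V_DCkernel} and matches \eqref{eqn:W}. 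For part (iii), from \eqref{eqn:sol_MaxEnt_per_DCkernel} and $\det L = 1$,
\[
\det(\cK_{DC}) \;=\; \frac{1}{\det V} \;=\; (1-\rho^2)^{\,n-1}\,\lambda^{1+2+\cdots+n} \;=\; \lambda^{n(n+1)/2}\,(1-\rho^2)^{n-1},
\]
which is \eqref{eqn:det_K_DC}.

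The only mildly delicate step is the explicit computation of $L^{-1}$ (needed for (i)) and the case analysis when multiplying out $LVL^\top$ to recover the tridiagonal formula (needed for the explicit expression in (ii)); both are routine but require careful bookkeeping of the boundary indices $i=1$ and $i=n$, since the interior diagonal entries pick up both the $v_{jj}$ and $v_{j-1,j-1}$ contributions while the corner entries pick up only one. Everything else reduces to arithmetic on powers of $\lambda$ and $\rho$.
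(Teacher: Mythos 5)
Your proposal is correct and follows essentially the same route as the paper: the paper's proof likewise derives (i) and (ii) by specializing the factored form of the central extension in Theorem \ref{thm:closed_form_central_extension}(ii) (via Proposition \ref{prop:MaxEntr_for_DCkernel}) and obtains (iii) from $\det\cK_{DC}^{-1}=\det V$ since $L$ has unit diagonal. You simply carry out explicitly the $m=1$ computations and boundary cases that the paper leaves to the reader, and all of them check out.
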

%\vspace{-1cm}
%\textcolor{blue}
{
\begin{proof} The proof of points (i) and (ii) follows from the maximum entropy interpretation of the DC kernel (Proposition \ref{prop:MaxEntr_for_DCkernel}) by exploiting the factored representation of the solution of the maximum entropy band extension problem illustrated in Theorem \ref{thm:closed_form_central_extension}.
% , by Proposition \ref{prop:MaxEntr_for_DCkernel}, the DC kernel is the one that maximizes the entropy among all kernels that satisfies the moment constraints \eqref{eqn:moment_contraints_DC}.
For what concern point (iii), it suffices to notice that the first (resp., third) factor on the right hand side of \eqref{eqn:sol_MaxEnt_per_DCkernel} is a lower (resp., upper) triangular matrix with diagonal entries equal to one, and hence %the positive definite matrix
$\cK_{DC}^{-1}$ has determinant equal to the determinant of $V$. %, which is immediately available being $V$ diagonal.
\end{proof}}

\begin{remark}
%\textcolor{blue}
{From Proposition \ref{prop:strucproperty4DCkernel}, it is easy to see  that $\cK_{DC}^{-1}$ admits the Cholesky factorization
%is tridiagonal  and its Cholesky factorization is bidiagonal and has closed-form expression in terms of hyper-parameters:
\begin{align}\label{eq:invchol} \cK_{DC}^{-1}=DD^\top
\end{align} where  $D$ is an $n$-dimensional lower bidiagonal matrix:
\begin{equation*}
D(i,j) =
\begin{cases}
\lambda^{-i/2}(1-\rho^2)^{-1/2}, & \text{for } i=j, j < n \\
- \lambda^{-i/2}(1-\rho^2)^{-1/2} \lambda^{-1/2}\rho,  & \text{for } i=j+1, j < n \\
\lambda^{-n/2} & \text{for } i=j=n
\end{cases}\,.
\end{equation*}}
\end{remark}

\begin{remark} %\textcolor{blue}
{TC kernels can be seen as a special case of DC kernels with $\rho=\sqrt{\lambda}$.
It follows that closed-form expressions for the factors of the TC kernel, as well as for its determinant and inverse,
can be obtained by setting $\rho=\sqrt{\lambda}$ in \eqref{eqn:fatt_KDC}--%\eqref{eqn:U}, \eqref{eqn:W}, \eqref{eqn:KDC_inv} and
\eqref{eqn:det_K_DC}, see \cite{Carli2014} for a complete derivation.
Nevertheless, as mentioned above, the TC kernel can also be seen as a particular member of the family of stable spline kernels (stable spline kernel of order one -- see \cite{PillonettoDeNicolao2010}).
%\citep{PillonettoChiusoDeNicolao2010_ACC}).
%Thus one may wonder if the maximum entropy interpretation holds for the family of stable spline %kernels or not.
However, it is straightforward to verify that %unlike the family of DC kernels,
the maximum entropy interpretation does \emph{not} extend to the family of stable spline kernels
(e.g. to second--order stable spline kernels), that do \emph{not} admit a tridiagonal inverse.}
\end{remark}

% ==============================
% ==============================

\section{Exploiting the  DC kernel structure for solving (\ref{eq:lglklhd}) }\label{sec:comp_complexity_DC}

%\textcolor{blue}
{
As pointed out in Remark \ref{rem:marginal-lik-maximization_discussion},  kernel matrices can be very ill--conditioned and thus
special care needs to be taken in solving the marginal likelihood maximization problem (\ref{eq:lglklhd}) (and also (\ref{eqn:f})).
This issue has been discussed in \cite{ChenLjung2013} and the algorithm proposed therein relies on the numerical computation of the Cholesky factorization of $\cK_{DC}$.
{In fact, a definite improvement both of \emph{numerical stability} and %a significant
\emph{computational efficiency} can be achieved by exploiting %sparsity of the inverse kernel matrix
sparsity of the factorization of $\cK_{DC}^{-1}$ along with its closed--form representation.
The modified algorithm is described in the following. }
}

%, \textcolor{blue}{where $D(1:q)$ is upper triangular with {positive
%diagonal entries \cite[Thm 5.2.2]{GVL:96}.}}

%(if the diagonal entries of $D(1:q)$ are
%not positive, it is always possible to obtain a factorization where
%the upper triangular matrix has positive diagonal entries by
%left--multiplying $D(1:q)$, and right--multiplying $C(1:p,1:q)$, by
%a $q \times q$ diagonal matrix with the signs of the diagonal
%entries of $D(1:q)$ as diagonal entries \cite{GVL:96}).

%\begin{assumption}\label{Ass1} \textcolor{blue}{Assume that $\text{\emph{rank}} \begin{bmatrix}   \Phi^\top & Y
%\end{bmatrix} = n+1$.}
%\end{assumption}

\subsection{Making use of the structure of DC kernel}
By exploiting the Sylvester's determinant theorem and the matrix inversion lemma, the addends in the objective \eqref{eq:lglklhd} can be rewritten as
\begin{align}\nonumber
&\det (\sigma^2I_N + \Phi^\top\cK_{DC}\Phi)\\\nonumber &\qquad
\qquad = (\sigma^2)^{N-n}\det (\cK_{DC}) \det(\sigma^2 \cK_{DC}^{-1}
+\Phi\Phi^\top)\\
%&Y^\top(\sigma^2I_N + \Phi^\top\cK_{DC}\Phi)^{-1}Y = Y^\top Y/\sigma^2 + \nonumber \\
%&  \qquad \qquad \qquad - Y^\top\Phi^\top(\sigma^2\cK_{DC}^{-1} + \Phi\Phi^\top)^{-1}\Phi Y/\sigma^2
&Y^\top(\sigma^2I_N + \Phi^\top\cK_{DC}\Phi)^{-1}Y  \nonumber \\
  &  \qquad \qquad  = \frac{Y^\top Y}{\sigma^2}- Y^\top\Phi^\top(\sigma^2\cK_{DC}^{-1} + \Phi\Phi^\top)^{-1}\frac{\Phi Y}{\sigma^2}
\label{eq:lglklhd_reduce}
\end{align}
%\textcolor{blue}
{where the term
$(\sigma^2\cK_{DC}^{-1} + \Phi\Phi^\top)^{-1}\frac{\Phi Y}{\sigma^2}$
in  \eqref{eq:lglklhd_reduce} can be seen as the solution of the least square problem
$
\min_{x} \left\| Ax -b\right\|^{2}
$
with coefficient matrix $A$ and vector $b$ given by
$A = \bmat \Phi & \sigma D \emat^\top$, %\qquad
$b= \bmat Y^{\top} & 0 \emat^{\top}$. }

%\textcolor{blue}
{Now, recall the definition of \emph{thin} QR
factorization \cite{GVL:96}: if $B=CD$ is a QR factorization of
$B\in\R^{p\times q}$ with $p\geq q$, then $B=C(1:p,1:q)D(1:q)$ is
called the thin QR factorization of $B$, where, without loss of
generality, $D(1:q)$ is assumed to have positive diagonal entries
\cite{GVL:96}.} Then consider the thin QR factorization
of
\begin{align}\label{eq:qr1} \left[
  \begin{array}{cc}
    \Phi^\top & Y \\
    \sigma D^\top & 0 \\
  \end{array}
\right] = QR = Q\left[
              \begin{array}{cc}
                R_1 & R_2 \\
              0 & r\end{array}
            \right]
\end{align}
where $D$ is the Cholesky factor in \eqref{eq:invchol},
$Q$ is an $(N+n)\times (n+1)$ matrix such $Q^\top Q=I_{n+1}$,
and $R$ is an $(n+1)\times (n+1)$ upper triangular matrix.
Here, $R$ is further partitioned into $2\times2$ blocks with $R_1,R_2$ and $r$ being an $n\times n$ matrix, an $n\times 1$ vector and a scalar, respectively.
By left--multiplying both sides of \eqref{eq:qr1} by its transpose, taking into account that $Q^\top Q=I_{n+1}$, we have
\begin{align*} &\sigma^2
\cK_{DC}^{-1} +\Phi\Phi^\top  = R_1^\top R_1, \quad \Phi Y = R_1^\top R_2,\quad
Y^\top Y = R_2^\top R_2 + r^2,
\end{align*}
so that terms in \eqref{eq:lglklhd_reduce} can be written as
\begin{align*}
&\det (\sigma^2I_N + \Phi^\top\cK_{DC}\Phi) %&=(\sigma^2)^{N-n} \det
%\cK_{DC} \det(\sigma^2 \cK_{DC}^{-1}
%+\Phi\Phi^\top)\\&=(\sigma^2)^{N-n} \det \cK_{DC} \det (R_1^\topR_1)\\
=(\sigma^2)^{N-n} \det (\cK_{DC}) \det (R_1)^2\,,\\
&Y^\top(\sigma^2I_N +
\Phi^\top\cK_{DC}\Phi)^{-1}Y %Y^\topY/\sigma^2 - Y^\top\Phi^\top(\sigma^2I_n +
%\Phi\Phi^\top)^{-1}\Phi Y/\sigma^2\\&=(R_2^\topR_2+r^2)/\sigma^2 -
%R_2^\topR_1(R_1^\topR_1)^{-1}R_1^\topR_2/\sigma^2
%\\&
= r^2/\sigma^2\,.
\end{align*}
%where the fact that $R_1$ is nonsingular has been used.
Further, by exploiting the closed form expression of the determinant of $\cK_{DC}$ (\ref{eqn:det_K_DC}), the cost function in
(\ref{eq:lglklhd}) can be rewritten as
\begin{align}\label{eq:lglklhd2}  %&r^2/\sigma^2 + (N-n)\log\sigma^2 + \log\det \cK_{DC} +2\log
%\det (R_1)\nonumber\\ &=
&\frac{r^2}{\sigma^2} + (N-n)\log\sigma^2 + \frac{n(n+1)}{2}\log
\lambda   \nonumber \\&\qquad\qquad+ (n-1)\log(1-\rho^2)+  2\log
\det (R_1) \,.
\end{align}

\subsection{Preprocessing the data}

%\textcolor{blue}
{The computational complexity associated with the QR factorization (\ref{eq:qr1}) depends on the number of data points $N$.
We can get rid of this dependency in the following way. }
Consider the thin QR factorization %of the matrix
\begin{align}\label{eq:qr0} \left[
  \begin{array}{cc}
    \Phi^\top & Y \\
  \end{array}
\right] =Q_d \left[
  \begin{array}{cc}
    R_{d1} & R_{d2} \\
  \end{array}
\right]
\end{align}
where $Q_d$ is an $N\times (n+1)$ matrix whose columns are orthogonal unit vectors
such that $Q_d^\top Q_d=I_{n+1}$, $R_{d1}$ is an $(n+1)\times n$ matrix and $R_{d2}$ is an $(n+1)\times 1$ vector,
and the thin QR factorization of
\begin{align}\label{eq:qr4} \left[
  \begin{array}{cc}
    R_{d1} & R_{d2} \\
    \sigma D^\top & 0 \\
  \end{array}
\right] =  Q_c R_c
\end{align}
where $ Q_c$ is an $(2n+1)\times (n+1)$ matrix %whose columns are orthogonal unit vectors
such that $Q_c^\top Q_c=I_{n+1}$ and $R_c$ is an $(n+1)\times (n+1)$ upper triangular matrix.
Then from (\ref{eq:qr0}) and (\ref{eq:qr4}), we have
\begin{align}\label{eq:qr5} \left[
  \begin{array}{cc}
    \Phi^\top & Y \\
    \sigma D^\top & 0 \\
  \end{array}
\right] = \left[
              \begin{array}{cc}
                Q_d & 0 \\
              0 & I_n\end{array}
            \right]   Q_cR_c\,,
\end{align}
%\textcolor{blue}
{which actually gives another thin QR factorization
of $\left[
  \begin{array}{cc}
    \Phi^\top & Y \\
    \sigma D^\top & 0 \\
  \end{array}
\right]$ and where $R_{c}$ can be block--partitioned similarly to what done in \eqref{eq:qr1}. }

%\textcolor{blue}{In what follows, we assume without loss of
%generality that the thin QR factorization (\ref{eq:qr1}) is computed
%from (\ref{eq:qr0}) to  (\ref{eq:qr5}).}

%Thus a QR factorization of $\left[
%  \begin{array}{cc}
%    \Phi^\top & Y \\
%    \sigma D^\top & 0 \\
%  \end{array}
%\right]$ 
%can be computed according to \eqref{eq:qr0} -- \eqref{eq:qr5}, where $R_{c}$ can be block--partitioned similarly to what done in \eqref{eq:qr1}. 

\begin{remark} %\textcolor{blue}
{
One may wonder whether $\left[
  \begin{array}{cc}
    \Phi^\top & Y \\
    \sigma D^\top & 0 \\
  \end{array}
\right]$ can have a unique thin QR factorization.
%\begin{align}\label{eq:qr6}
% R = R_c, \qquad  Q =
%\begin{bmatrix}
%    Q_d & 0 \\
%    0 & I_n \\
%  \end{bmatrix}Q_c\,.
%\end{align}
The answer is affirmative if $[\Phi^{\top} \, Y]$ is full column
rank according to \cite[Thm 5.2.2]{GVL:96}.}

%%Note that (\ref{eq:qr1}) and (\ref{eq:qr5}) give two thin QR factorizations of the same matrix, which has full column rank under Assumption \ref{Ass1}. By  \cite[Thm 5.2.2]{GVL:96}, a full column rank matrix has a unique thin QR factorization and thus the two thin QR factorizations are same, i.e.,
%%the factors $R$ and $Q$ in (\ref{eq:qr1}) can be represented as
%\textcolor{magenta}{ \textcolor{magenta}{ Since $Y$ is a linear
%combination of the columns of $\Phi^{\top}$ corrupted by noise,  it
%is reasonable to assume that $[\Phi^{\top} \, Y]$ is full column
%rank.
%Thus the thin QR factorization is unique \cite[Thm 5.2.2]{GVL:96} and %, by uniqueness of the QR factorization,
%comparing  \eqref{eq:qr1} and \eqref{eq:qr5}, we get that the
%factors $R$ and $Q$ in \eqref{eq:qr1} can be written as }
%%Since $Y$ is a linear combination of the columns of $\Phi^{\top}$ corrupted by noise, it is reasonable to assume that $[\Phi^{\top} \, Y]$ is full column rank.
%%Thus the matrix on the left--hand side of \eqref{eq:qr1} (equiv. \eqref{eq:qr5}) is full rank and the thin QR factorization must be unique \cite[Thm 5.2.2]{GVL:96}. % and %, by uniqueness of the QR factorization,
%%Thus comparing  \eqref{eq:qr1} and \eqref{eq:qr5}, we get that the factors $R$ and $Q$ in \eqref{eq:qr1} can be written as
%}
%\begin{align}\label{eq:qr6}
% R = R_c, \qquad  Q =
%\begin{bmatrix}
%    Q_d & 0 \\
%    0 & I_n \\
%  \end{bmatrix}Q_c\,.
%\end{align}
%Therefore,  \eqref{eq:qr4} and \eqref{eq:qr6} provides an efficient
%way  to compute (\ref{eq:qr1}).

\end{remark}

\subsection{Proposed algorithm}

\begin{proposition}\label{prop} %\textcolor{blue}
{The cost function in (\ref{eq:lglklhd}) can be
computed according to (\ref{eq:lglklhd2}).}
For a given $\eta$, the MAP %regularized impulse response
estimate (\ref{eqn:f}) can be computed as %according to
\begin{align}
  \hat g_{MAP} = (\sigma^2 \cK_{DC} ^{-1}
+\Phi\Phi^\top)^{-1}\Phi Y = R_1^{-1} R_2
\end{align}
\end{proposition}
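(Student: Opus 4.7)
The plan is to assemble the proposition from pieces already laid out in Section 5, plus one additional algebraic identity for the MAP formula. The statement has two parts: the equivalence of the cost function to the form \eqref{eq:lglklhd2}, and the closed-form expression for $\hat g_{MAP}$ in terms of the QR factors $R_1,R_2$. Both parts hinge on the three identities that follow from left-multiplying \eqref{eq:qr1} by its transpose, namely
\begin{equation*}
\sigma^2 \cK_{DC}^{-1}+\Phi\Phi^\top=R_1^\top R_1,\qquad \Phi Y=R_1^\top R_2,\qquad Y^\top Y=R_2^\top R_2+r^2,
\end{equation*}
together with the determinant formula \eqref{eqn:det_K_DC} for $\cK_{DC}$.

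For the first part I would start from \eqref{eq:lglklhd} and apply Sylvester's determinant theorem to the log-determinant term and the matrix inversion lemma to the quadratic form, producing the two expressions in \eqref{eq:lglklhd_reduce}. Substituting the first QR identity gives $\det(\sigma^2\cK_{DC}^{-1}+\Phi\Phi^\top)=\det(R_1)^2$. For the quadratic form, substituting $\Phi Y=R_1^\top R_2$ yields $Y^\top\Phi^\top(\sigma^2\cK_{DC}^{-1}+\Phi\Phi^\top)^{-1}\Phi Y=R_2^\top R_2$, and using $Y^\top Y=R_2^\top R_2+r^2$ collapses the whole quadratic form to $r^2/\sigma^2$. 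Taking the logarithm of the determinant identity, plugging in \eqref{eqn:det_K_DC}, and dropping terms that are independent of $\eta$ (or absorbing them into the constant for minimization) produces the five-term sum in \eqref{eq:lglklhd2}.

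For the second part I would use the push-through identity $\cK_{DC}\Phi(\Phi^\top\cK_{DC}\Phi+\sigma^2 I_N)^{-1}=(\sigma^2\cK_{DC}^{-1}+\Phi\Phi^\top)^{-1}\Phi$, which is verified in one line by multiplying both sides on the left by $\sigma^2\cK_{DC}^{-1}+\Phi\Phi^\top$ and on the right by $\sigma^2 I_N+\Phi^\top\cK_{DC}\Phi$. Applied to \eqref{eqn:f}, this gives $\hat g_{MAP}=(\sigma^2\cK_{DC}^{-1}+\Phi\Phi^\top)^{-1}\Phi Y$. Invoking the first and second QR identities above produces $\hat g_{MAP}=(R_1^\top R_1)^{-1}R_1^\top R_2=R_1^{-1}R_2$, which is the claim.

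There is no deep obstacle here; the content of the proposition is essentially a repackaging of calculations already performed in Sections~5.1--5.2, and the only novelty is the push-through identity for the MAP estimate. The one point that needs a brief justification is that $R_1$ is invertible, which follows because $\sigma^2\cK_{DC}^{-1}+\Phi\Phi^\top$ is positive definite (it is the sum of a positive definite and a positive semidefinite matrix), so its upper-triangular Cholesky-like factor $R_1$ has nonzero diagonal. Uniqueness of the thin QR factorization used implicitly here is covered by the remark following \eqref{eq:qr5}, assuming $[\Phi^\top\ Y]$ is full column rank.
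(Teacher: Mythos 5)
Your proposal is correct and follows essentially the same route as the paper: the cost-function part is exactly the derivation of Section 5.1 (Sylvester's determinant theorem, matrix inversion lemma, and the three identities obtained from $Q^\top Q=I_{n+1}$ in \eqref{eq:qr1}), and the MAP part is the standard push-through identity combined with $\sigma^2\cK_{DC}^{-1}+\Phi\Phi^\top=R_1^\top R_1$ and $\Phi Y=R_1^\top R_2$. One tiny remark: no terms actually need to be dropped or absorbed to reach \eqref{eq:lglklhd2} --- the expression equals the cost in \eqref{eq:lglklhd} exactly (with $c$ normalized as in \eqref{eqn:det_K_DC}) --- so your parenthetical about discarding $\eta$-independent terms is unnecessary.
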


\begin{algorithm}\label{alg4}
Assume that the thin QR factorization (\ref{eq:qr0}) has been computed beforehand.
Then the cost function in (\ref{eq:lglklhd}) can be computed via the following steps:
\begin{itemize}
\item[1)] {evaluate the Cholesky factor $D$ of $\cK_{DC}^{-1}$ %$P(\alpha)^{-1}$ %available in closed--form
according to \eqref{eq:invchol}. }
\item[2)] compute the thin QR factorization (\ref{eq:qr4});
\item[3)] compute the objective function according to (\ref{eq:lglklhd2}). %$r^2/\sigma^2 +
%(N-n)\log\sigma^2 + \frac{n(n+1)}{2}\log \lambda +
%(n-1)\log(1-\rho^2) +2\log \det (R_1)$
\end{itemize}
\end{algorithm}

%\textcolor{blue}{Clearly, Algorithm \ref{alg4} is numerically more
%stable than Algorithm 2 in \cite{ChenLjung2013} because the former
%makes use of the closed--form Cholesky factors of $\cK_{DC}^{-1}$
%while the latter relies on the numerical computation of the Cholesky factors of $\cK_{DC}$, that can be very ill--conditioned. %which can be a problem due to the ill-conditioning of the kernel matrix.
%}

%\textcolor{blue}
{Clearly, Algorithm \ref{alg4} is numerically more
stable than Algorithm 2 in \cite{ChenLjung2013} because it 
makes use of the closed--form Cholesky factors of $\cK_{DC}^{-1}$
while Algorithm 2 in \cite{ChenLjung2013} relies on the numerical computation of the Cholesky factors of $\cK_{DC}$, that can be very ill--conditioned. %which can be a problem due to the ill-conditioning of the kernel matrix.
}

Now we compare  Algorithm \ref{alg4} with Algorithm 2 in 
\cite{ChenLjung2013} in terms of computational complexity. Both
algorithms consist of three parts:
\begin{itemize}
\item \textit{preprocessing}:
Both algorithms
require to compute (\ref{eq:qr0}) beforehand, %which requires $2(n+1)^2(N-(n+1)/3)$ flops according to \cite{GVL:96}.
with a computational cost of $2(n+1)^2(N-(n+1)/3)$ flops (see \cite{GVL:96}).
This part is done once at the beginning of marginal likelihood maximization process.

%\item \textit{evaluation of (\ref{eq:lglklhd})}:
%For what concerns Algorithm \ref{alg4}, {\emph{evaluation} of the Cholesky factor $D$ of %the {inverse}  kernel matrix
%$\cK_{DC} ^{-1}$ only requires the storage of $2n-1$ scalars ($D$ is a bidiagonal matrix and factors are available in closed form). }
%A straightforward computation of the QR factorization (\ref{eq:qr4}) in step 2) requires $2(n+1)^2(2n+1-(n+1)/3)$ flops, and step 3)  requires $n+20$ flops.
%For what concerns Algorithm 2 in \cite{ChenLjung2013}, steps 1) (\emph{numerical computation} of the Cholesky factorization of $\cK_{DC}$) and 4) (evaluation of the cost function),
%require $n^3/3+n^2/2+n/6$ and $2n+6$ flops, respectively \cite{Hunger:07}.
%Step 2) (matrix multiplication) requires $n^2(n+1)$ flops and
%step 3) (QR factorization) %with straightforward computation of the QR factorization
%requires $2(n+1)^2(2n+1-(n+1)/3)$ flops.
%This part is done at every iteration in the solution of the marginal likelihood maximization.

\item \textit{preparation for the evaluation of (\ref{eq:lglklhd})}:
Algorithm \ref{alg4} needs to create $D^\top$, the Cholesky
factorization of $\cK_{DC} ^{-1}$, and  Algorithm 2 in
\cite{ChenLjung2013} requires to create $\cK_{DC}$. Since $D^\top$
is bidiagonal, only $2n-1$ scalars have to be created. In contrast,
since $\cK_{DC}$ is symmetric, $n(n+1)/2$ scalars are created. This
part is done at every iteration in the solution of the marginal
likelihood maximization.

%\item \textit{evaluation of (\ref{eq:lglklhd})}:
%For Algorithm \ref{alg4}, straightforward computation of the QR
%factorization (\ref{eq:qr4}) in step 1) requires
%$2(n+1)^2(2n+1-(n+1)/3)$ flops, and step 2) requires $n+20$ flops.
%For \cite[Algorithm 2]{ChenLjung2013}, the steps 1) and 4),
%according to \cite{Hunger:07}, require $n^3/3+n^2/2+n/6$ and $2n+6$
%flops, respectively. The step 2) requires $n^2(n+1)$ flops and the
%step 3) with straightforward computation of the QR factorization
%requires $2(n+1)^2(2n+1-(n+1)/3)$ flops according to \cite{GVL:96}.
%This part is done at every iteration in the solution of the marginal
%likelihood maximization.

\item \textit{evaluation of (\ref{eq:lglklhd})}:
For what concerns Algorithm \ref{alg4}, a straightforward computation of the QR
factorization (\ref{eq:qr4}) in step 2) requires
$2(n+1)^2(2n+1-(n+1)/3)$ flops, and step 3) requires $n+20$ flops. 
For what concerns Algorithm 2 in \cite{ChenLjung2013}, steps 1) (\emph{numerical computation} of the Cholesky factorization of $\cK_{DC}$) and 4) (evaluation of the cost function),
%according to \cite{Hunger:07}, 
require $n^3/3+n^2/2+n/6$ and $2n+6$ flops, respectively  \cite{Hunger:07}. 
Step 2) (matrix multiplication) requires $n^2(n+1)$ flops and
step 3) (QR factorization) %with straightforward computation of the QR factorization
requires $2(n+1)^2(2n+1-(n+1)/3)$ flops \cite{GVL:96}. 
This part is done at every iteration in the solution of the marginal
likelihood maximization.

%For what concerns Algorithm \ref{alg4}, {\emph{evaluation} of the Cholesky factor $D$ of %the {inverse}  kernel matrix 
%$\cK_{DC} ^{-1}$ only requires the storage of $2n-1$ scalars ($D$ is a bidiagonal matrix and factors are available in closed form). }
%A straightforward computation of the QR factorization (\ref{eq:qr4}) in step 2) requires $2(n+1)^2(2n+1-(n+1)/3)$ flops, and step 3)  requires $n+20$ flops. 
%For what concerns Algorithm 2 in \cite{ChenLjung2013}, steps 1) (\emph{numerical computation} of the Cholesky factorization of $\cK_{DC}$) and 4) (evaluation of the cost function),
%require $n^3/3+n^2/2+n/6$ and $2n+6$ flops, respectively \cite{Hunger:07}. 
%Step 2) (matrix multiplication) requires $n^2(n+1)$ flops and
%step 3) (QR factorization) %with straightforward computation of the QR factorization
%requires $2(n+1)^2(2n+1-(n+1)/3)$ flops. 

\end{itemize}

To summarize, for the \emph{preprocessing}, both algorithms require
the same amount of computations. For the \emph{preparation for the
evaluation of (\ref{eq:lglklhd})} with large $n$,  the storage and
computational complexity of Algorithm \ref{alg4} is negligible if
compared to the one of Algorithm 2 in \cite{ChenLjung2013}. Finally,
for the \emph{evaluation of (\ref{eq:lglklhd})} with large $n$,
%Algorithm \ref{alg4} saves approximately 28\% of the computations required by Algorithm 2 in \cite{ChenLjung2013}. 
%\textcolor{blue}{To demonstrate the advantage of Algorithm \ref{alg4}, numerical simulation results are provided in Example \ref{exm} below}. 
Algorithm \ref{alg4} saves approximately 28\% of the flops required by Algorithm 2 in \cite{ChenLjung2013}. 
The computational time required by the two algorithms is compared in Example \ref{exm} below. 
%{To demonstrate the advantage of Algorithm \ref{alg4}, numerical simulation results are provided in Example \ref{exm} below}.

\begin{remark} %\textcolor{blue}
{
When optimizing the marginal likelihood, the dimension of the optimization variable (the hyperparameters vector) is usually small (e.g. $\eta = [c, \lambda, \,\rho]$ for the DC kernel),
so marginal likelihood maximization can be performed %it is often quick enough to use
by using some derivative--free nonlinear optimization solver, such as \texttt{fminsearch} in Matlab,
combined with Algorithm \ref{alg4} for the  computation of the cost function \eqref{eq:lglklhd}. %, feeding the result back to the solver.
On the other hand, there are many gradient and/or Hessian based nonlinear optimization solvers (e.g. \texttt{fmincon} in Matlab), which may be used to speed up the optimization of \eqref{eq:lglklhd}.
} Let $l(\cK_{DC})$ denote the cost function in (\ref{eq:lglklhd}). Then its gradient and Hessian are computed as:
\small\begin{align*}%\label{eq:gradfinal}
\frac{\partial l(\cK_{DC})}{\partial \eta_i} &= \rm{trace}( (X_1 -
X_2) \frac{\partial
\cK_{DC}}{\partial \eta_i}) \\
\frac{\partial^2 l(\cK_{DC})}{\partial \eta_i\eta_j} & = \rm{trace}(
(X_1 - X_2) \frac{\partial^2 \cK_{DC}}{\partial
\eta_i\eta_j} \nonumber \\
&+ \rm{trace}( (X_1\frac{\partial \cK_{DC}}{\partial \eta_i}X_2  -
(X_1-X_2) \frac{\partial \cK_{DC}}{\partial \eta_i} X_1  )
\frac{\partial \cK_{DC}}{\partial
\eta_j})%\label{eq:hessfinal}
\end{align*}\normalsize where $\eta_i$ the $i$th element of $\eta$, and $X_{1}$ and $X_{2}$ are given by:
$X_1 =  \cK_{DC}^{-1} - \sigma^2 \cK_{DC}^{-1} (R_1^\top R_1)^{-1}\cK_{DC}^{-1}$, $ X_2 =  \cK_{DC}^{-1}R_1^{-1}R_2R_2^\top R_1^{-T} \cK_{DC}^{-1}$.
Thus, by exploiting closed form expression and the sparse tridiagonal structure of $\cK_{DC}^{-1}$, the gradient and Hessian of the cost function \eqref{eq:lglklhd} %for $\cK = \cK_{DC}$
can be computed in an efficient and robust way.
\end{remark}

\begin{remark} \label{rmkadpalg}
Algorithm 2 in \cite{ChenLjung2013} can be modified by making use of the Cholesky factorization of $\cK_{DC}$,
$\cK_{DC}=LL^\top$ with $L=UW^{1/2}$ (see \eqref{eqn:fatt_KDC}). % where $W^{1/2}$ is the square root of $W$.
The adapted algorithm does not involve numerical computation of the Cholesky factorization.
%In contrast with \cite[Algorithm 2]{ChenLjung2013}, approximately 30\% computation is saved  for the evaluation of (\ref{eq:lglklhd}) for large $n$.
The computational saving that can be achieved via this modification is discussed in the following example.
\end{remark}

\begin{example}\label{exm}
%\textcolor{blue}
{
%\textcolor{magenta}{Algorithm \ref{alg4} is numerically more stable than Algorithm 2 in \cite{ChenLjung2013} because {numerical computation} of the Cholesky factorization is avoided. }
We show that Algorithm \ref{alg4} is computationally more efficient
than Algorithm 2 in \cite{ChenLjung2013}.
We set the FIR model order $n=125$, the number of data point $N=500$, and the hyperparameters of the DC kernel (\ref{eqn:DC_kernel}) to $c=1,\lambda=0.9,\rho=0.8$ and
we generate the data $\Phi$ and $Y$ randomly and take the noise variance $\sigma^2=0.2$.
Then we compute the cost function in (\ref{eq:lglklhd}) for 5000 times with the above specified parameters and data.
We test three algorithms denoted by \texttt{A}, \texttt{B}, \texttt{C}, respectively: algorithm \texttt{A} is Algorithm 2 in \cite{ChenLjung2013},
algorithm \texttt{B} is the adapted algorithm sketched in Remark \ref{rmkadpalg}, and  algorithm \texttt{C} is Algorithm \ref{alg4}.
The time (in seconds) taken by different algorithms for 5000 evaluations of the cost function in (\ref{eq:lglklhd}) %for 5000 times
is given below:
\begin{center}
  \begin{tabular}{ l  || c | c | c }
    \hline
    Algorithm & \texttt{A} & \texttt{B} & \texttt{C} \\ \hline
    Time [sec] & 21.9 & 20.4 & 6.8 \\ \hline
  \end{tabular}
\end{center}
which shows %in contrast with algorithm \texttt{A},
that algorithm \texttt{B} and algorithm \texttt{C} save, respectively,  6.9\% and 69.2\% of the computational time required by
%if compared to
algorithm \texttt{A}. }

%\end{table}

\end{example}

%\bibliographystyle{IEEEtranS}
%\bibliography{se}
%\iffalse
%\begin{thebibliography}{10}
%
%\end{thebibliography}
%\fi

%\bibliographystyle{IEEEtran}
%\bibliography{../ref}

\vspace{-2mm}

\section{Conclusions}\label{sec:Conclusions}

Gaussian process regression for linear system identification has become popular recently, due to the introduction
of families of kernels which encode structural properties of the dynamical system.
A family of kernels that has been shown to be
particularly effective %in the system identification framework
is the family of %Diagonal/Correlated (DC)
DC kernels.
The main contribution of the present paper is to show that the family of DC kernels admits a maximum entropy interpretation.
{This interpretation can be exploited in conjunction with results on matrix completion problems in the graphical models literature
to shed light on %(provide an elegant proof of)
the structure of the DC kernel. }
In particular, we proved that the DC kernel admits a closed-form inverse, determinant and factorization.
%Maximum likelihood properties of the DC kernel were also highlighted.
The DC kernel structure has been exploited to improve the numerical stability and reduce computational complexity associated with the computation of the associated estimator.
%We showed that these results can be exploited both to improve the stability and to reduce the computational complexity associated with the computation of the associated estimator.

%\addtolength{\textheight}{-12cm}   % This command serves to balance the column lengths
                                  % on the last page of the document manually. It shortens
                                  % the textheight of the last page by a suitable amount.
                                  % This command does not take effect until the next page
                                  % so it should come on the page before the last. Make
                                  % sure that you do not shorten the textheight too much.

%%%%%%%%%%%%%%%%%%%%%%%%%%%%%%%%%%%%%%%%%%%%%%%%%%%%%%%%%%%%%%%%%%%%%%%%%%%%%%%%

%%%%%%%%%%%%%%%%%%%%%%%%%%%%%%%%%%%%%%%%%%%%%%%%%%%%%%%%%%%%%%%%%%%%%%%%%%%%%%%%

%%%%%%%%%%%%%%%%%%%%%%%%%%%%%%%%%%%%%%%%%%%%%%%%%%%%%%%%%%%%%%%%%%%%%%%%%%%%%%%%
%\section*{APPENDIX}
%
%Appendixes should appear before the acknowledgment.
%
%\section*{ACKNOWLEDGMENT}
%
%The preferred spelling of the word ÒacknowledgmentÓ in America is without an ÒeÓ after the ÒgÓ. Avoid the stilted expression, ÒOne of us (R. B. G.) thanks . . .Ó  Instead, try ÒR. B. G. thanksÓ. Put sponsor acknowledgments in the unonumberbered footnote on the first page.
%
%
%
%%%%%%%%%%%%%%%%%%%%%%%%%%%%%%%%%%%%%%%%%%%%%%%%%%%%%%%%%%%%%%%%%%%%%%%%%%%%%%%%%
%
%References are important to the reader; therefore, each citation must be complete and correct. If at all possible, references should be commonly available publications.
%

%\bibliographystyle{plainnat}
\bibliographystyle{plain}
\bibliography{biblio_DC_kernel}

%\balance

\end{document}